\theoremstyle{plain}
\newtheorem{theorem}{Theorem}[section]
\newtheorem{lemma}[theorem]{Lemma}
\theoremstyle{definition}
\newtheorem{example}[theorem]{Example}
\numberwithin{equation}{section}
\newcommand{\NN}{{\mathbb N}}
\newcommand{\ZZ}{{\mathbb Z}}
\newcommand{\QQ}{{\mathbb Q}}
\newcommand{\PP}{{\mathcal P}}
\newcommand{\BB}{{\mathcal B}}
\title{Pseudoquotient extensions of measure spaces}
\author{Piotr Mikusi\'nski\\
\small{University of Central Florida}}
\begin{document}

\maketitle

\begin{abstract}  A space of pseudoquotients $\PP(X,S)$ is defined as equivalence classes of pairs $(x,f)$, where $x$ is an element of a non-empty set $X$, $f$ is an element of $S$, a commutative semigroup of injective maps from $X$ to $X$, and $(x,f) \sim (y,g)$ if $gx=fy$. In this note we assume that $(X,\Sigma,\mu)$ is a measure space and that $S$ is a commutative semigroup of measurable injections acting on $X$ and investigate under what conditions there is an extension of $\mu$ to $\PP(X,S)$.    
\end{abstract}

\section{Introduction}

The construction of pseudoqutients was introduced in \cite{Quo} under the name of ``generalized quotients." A space of pseudoquotients, denoted by $\PP(X,S)$, is defined as the space of equivalence classes of pairs $(x,f)\in X \times S$, where $X$ is a non-empty set and $S$ is a commutative semigroup of injective maps from $X$ to $X$.  The equivalence relation is defined as follows: $(x,f) \sim (y,g)$ if $gx=fy$. It is a generalization of the constructions of the field of quotients from an integral domain. For example, if we take $X=\ZZ$ and $S=\NN$ acting on $\ZZ$ by multiplication, then  $\PP(\ZZ,\NN)=\QQ$.

Pseudoquotients have desirable properties. The set $X$ can be identified with a subset of $\PP(X,S)$ and the semigroup $S$ can be extended to a commutative group of bijections acting on $\PP(X,S)$ (see \cite{KimPM} or \cite{Bolu}).  Under natural conditions on $S$, the algebraic structure of $X$ extends to $\PP(X,S)$. Pseudoquotient extensions also have good topological properties (see \cite{BMR}, \cite{CBMR}, \cite{AKPM1}, \cite{KimPM}, and \cite{MP}). A version of the pseudoquotient extension for noncommutative semigroup $S$ is presented in \cite{AKPM2}.

Pseudoquotients can be a useful tool in analysis (see, \cite{Levy}, \cite{Radon}, \cite{AMN}, \cite{Quo}, or \cite{Bolu}).

\section{Pseudoquotients}

Let $X$ be a nonempty set and let $S$ be a commutative semigroup of injective maps from $X$ into $X$.  For $(x,f), (y,g) \in  X \times S$ we write $(x,f) \sim (y,g)$  if $g x=f y$. It is easy to check that this is an equivalence relation in $X \times S$.  We define $\PP(X,S)  = (X \times S)/\sim$.  If there is no ambiguity, we will write $\PP$ instead of $\PP(X,S)$. Elements of $\PP$ will be called pseudoquotients.  The equivalence  class of $(x,f)$ will be denoted by $\frac{x}{f}$. This is a slight abuse of notion, but we follow here the tradition of denoting rational numbers by $\frac{p}{q}$ even though the same formal problem is present there.

Elements of $X$ can be identified with elements of $\PP $ via the embedding $\iota: X \to \PP(X, S)$ defined by
\[
\iota (x)= \frac{f x}{f}, 
\]
where $f$  is an arbitrary element of $S$. Action of $S$ can be extended to $\PP $ via
\[
f \frac{x}{g} = \frac{f x}{g}.
\]
If, for some $y \in X$, $f \frac{x}{g} = \iota(y)$, then we will write $f \frac{x}{g} \in X$ and $f \frac{x}{g}  = y$, which is formally incorrect, but convenient and harmless. For instance, we have  the natural equality $f \frac{x}{f} = x$.

Elements of $S$, when extended to maps on $\PP $, become bijections.  Indeed, for any $f\in S$ and $\frac{x}{g}\in \PP $, we have
\[
f \frac{x}{fg} = \frac{f x}{fg}= \frac{x}{g}.
\]
The action of $f^{-1}$ on $\PP $ can be defined as
\[
f^{-1}\frac{x}{g} = \frac{x}{fg}
\]
and $S$ can be extended to a commutative group 
$
G=\{f^{-1}g : f,g\in S\}
$
of bijections acting on $\PP $ .

If $f:X\to X$ is an injective map, then $S=\{f^n: n=0,1,2,\dots\}$ is a commutative semigroup of injections. We will refer to such an $S$ as the semigroup generated by $f$.

\section{Pseudoquotients on measurable spaces}

Let $(X,\Sigma)$ be a measurable space and let $S$ be a commutative semigroup of measurable injections acting on $X$ such that $f(A)\in\Sigma$ for all $f\in S$ and all $A\in\Sigma$. By $\Sigma_\PP$ we mean the $\sigma$-algebra in $\PP$ generated by the set
\[
\BB=\{A\subset \PP: f(A)\subset X \text{ and } f(A)\in \Sigma \text{ for some } f\in S\}.
\]

In what follows, when we write $f(A)\in \Sigma$ we will always mean that $f(A)\subset X$  and $f(A)\in \Sigma$.

\begin{lemma}
 Let $A,B\in\BB$ and let $A\subset B$. If $f(B)\in \Sigma$ for some $f\in S$, then $f(A)\in \Sigma$.  
\end{lemma}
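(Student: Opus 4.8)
The plan is to exploit the witness map for $A\in\BB$ together with its injectivity and measurability. Since $A\in\BB$, there is some $g\in S$ with $g(A)\in\Sigma$ (and, by the standing convention, $g(A)\subset X$). The goal is to deduce $f(A)\in\Sigma$ from the hypothesis $f(B)\in\Sigma$.

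First I would check that $f(A)$ is genuinely a subset of $X$. From $A\subset B$ we get $f(A)\subset f(B)$, and the hypothesis $f(B)\in\Sigma$ means (again by the convention) $f(B)\subset X$; hence $f(A)\subset X$. This is the only place where the inclusion $A\subset B$ and the assumption on $B$ enter, but it is essential: it lets us regard $f(A)$ as a bona fide subset of the domain $X$ on which $g$ acts as an honest injection, and on which the extended action of $g$ on $\PP$ coincides with the original map $g\colon X\to X$ (since $g\,\iota(x)=g\frac{hx}{h}=\frac{ghx}{h}=\frac{hgx}{h}=\iota(gx)$).

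Next I would compute $g(f(A))$. Because $S$ is commutative, the extended actions of $f$ and $g$ on $\PP$ commute: $fg\frac{x}{h}=\frac{fgx}{h}=\frac{gfx}{h}=gf\frac{x}{h}$. Hence $g(f(A))=f(g(A))$. Now $g(A)\in\Sigma$, and by the standing hypothesis $f$ carries members of $\Sigma$ to members of $\Sigma$, so $f(g(A))\in\Sigma$; that is, $g(f(A))\in\Sigma$.

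Finally, since $f(A)\subset X$ and $g$ is injective on $X$, we have $f(A)=g^{-1}\bigl(g(f(A))\bigr)$, where $g^{-1}$ now denotes the set-theoretic preimage under $g\colon X\to X$ (not the group inverse). As $g$ is measurable and $g(f(A))\in\Sigma$, it follows that $f(A)=g^{-1}\bigl(g(f(A))\bigr)\in\Sigma$, which completes the argument. I expect the only delicate point to be the bookkeeping about where sets live: one must confirm $f(A)\subset X$ before invoking injectivity and measurability of $g$, and confirm that $g$ applied to such a subset agrees with the original measurable injection $g\colon X\to X$; both facts are immediate from the definitions in Section 2.
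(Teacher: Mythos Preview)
Your proof is correct and follows essentially the same route as the paper: pick $g\in S$ with $g(A)\in\Sigma$, use commutativity to get $g(f(A))=f(g(A))\in\Sigma$, and then recover $f(A)=g^{-1}(g(f(A)))\in\Sigma$ via measurability and injectivity of $g$, with the inclusion $f(A)\subset f(B)\subset X$ justifying that last step. The paper's version is terser and writes the chain as $f(A)=f(g^{-1}(g(A)))=g^{-1}(f(g(A)))$ without distinguishing the group inverse from the set-theoretic preimage, but the content is identical; your added bookkeeping about where the sets live is a welcome clarification rather than a different argument.
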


\begin{proof}
If $g(A)\in\Sigma$ for some $g\in S$, then
 \[
 f(A)=f(g^{-1}(g(A)))=g^{-1}(f(g(A)).
 \]
 Since $g(A)\in\Sigma$, $f(g(A))\in\Sigma$, and consequently $g^{-1}(f(g(A))\in\Sigma$, because $f(A)\subset f(B)\subset X$. 
\end{proof}

\begin{theorem}
 Elements of the group $G$ of bijections on $\PP$ are measurable with respect to the $\sigma$-algebra $\BB$.
\end{theorem}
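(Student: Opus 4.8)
The plan is to reduce the statement to a short computation on the generating family $\BB$. Fix $h\in G$; to prove that $h$ is measurable we must check that $h^{-1}(E)\in\Sigma_\PP$ for every $E\in\Sigma_\PP$, and by the standard criterion for measurability into a $\sigma$-algebra described by a generating set it is enough to do this for $E\in\BB$. Now $h$ is a bijection of $\PP$, so the preimage $h^{-1}(A)$ equals the image of $A$ under the inverse bijection, and since $G$ is a group that inverse bijection again lies in $G$. Hence the whole theorem reduces to the following claim: \emph{for every $k\in G$ and every $A\in\BB$ we have $k(A)\in\BB$.}

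To prove the claim, write $k=f^{-1}g$ with $f,g\in S$ and pick $p\in S$ with $p(A)\in\Sigma$, which exists because $A\in\BB$. I would deal first with the ``$g$-part.'' From the defining formula $g\frac{x}{q}=\frac{gx}{q}$ and the commutativity of $S$ one gets $p\circ g=g\circ p$ as maps on $\PP$, so $p(g(A))=g(p(A))$. Since $p(A)\subseteq X$ and $p(A)\in\Sigma$, the standing hypothesis on $S$ gives $g(p(A))\in\Sigma$, and $g(p(A))\subseteq X$ because $g$ maps $X$ into $X$; thus $p(g(A))\in\Sigma$, i.e.\ $g(A)\in\BB$. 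Next the ``$f^{-1}$-part'': set $B=g(A)$, and use $f^{-1}\frac{y}{q}=\frac{y}{fq}$ together with the cancellation $\frac{fz}{fw}=\frac{z}{w}$ to compute $(fp)\bigl(f^{-1}(B)\bigr)=p(B)$. As $fp\in S$ and $p(B)=p(g(A))=g(p(A))$ lies in $\Sigma$ (and in $X$), this yields $f^{-1}(B)\in\BB$. Therefore $k(A)=f^{-1}(g(A))=f^{-1}(B)\in\BB$, which proves the claim and hence the theorem. As a by-product this also shows that each $h\in G$ carries $\Sigma_\PP$ bijectively onto itself, so it is an automorphism of the measurable space $(\PP,\Sigma_\PP)$.

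None of the individual steps is computationally heavy; the part requiring care is the bookkeeping between the two ``actions'' on subsets of $\PP$ — the image under an extended map $g\in S$ and the image under an extended inverse $f^{-1}\in G$ — and applying the commutativity of $S$ and the cancellation rule to exactly the right expressions. One must also remember to record the containments $p(A)\subseteq X$ and $g(p(A))\subseteq X$, so that the hypothesis ``$f(A)\in\Sigma$ for all $A\in\Sigma$'' — which, by the stated convention, includes $f(A)\subseteq X$ — really applies; this is the same sort of containment bookkeeping that the preceding lemma was designed to handle.
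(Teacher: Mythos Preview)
Your proof is correct and follows essentially the same route as the paper: both arguments show that every element of $G$ carries $\BB$ into $\BB$ by using commutativity to exhibit a witness in $S$. The paper does this in a single stroke, computing $(gh)\bigl(fg^{-1}(A)\bigr)=fh(A)\in\Sigma$ for a suitable $h\in S$, whereas you split the same computation into a ``$g$-part'' and an ``$f^{-1}$-part''; your explicit reduction from $\Sigma_\PP$-measurability to the claim on $\BB$ (via the generating-set criterion and the group structure of $G$) and your containment bookkeeping are welcome clarifications that the paper leaves implicit.
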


\begin{proof}
 It suffices to show that $fg^{-1}(A) \in \BB$ for every $f,g\in S$ and $A\in\BB$. If $A\in\BB$, then $h(A)\in \Sigma$ for some $h\in S$. Since $gh\in S$ and
 \[
(gh)fg^{-1}(A)=fh(A)\in\Sigma,
 \]
 we have $fg^{-1}(A)\in\BB$.
 \end{proof}

\section{Extension of measures}

A measurable map $f:(X,\Sigma,\mu)\to(X,\Sigma,\mu)$ is called $\mu$-{\it homogeneous}, or just {\it homogeneous}, if there is a constant $\alpha>0$ such that $\mu(f(A))=\alpha\mu(A)$ for every $A\in\Sigma$.  

\begin{theorem} Let $(X,\Sigma,\mu)$ be a measure space and let $S$ be a commutative semigroup of homogeneous measurable injections acting on $X$. Then there is a unique extension of $\mu$ to $\mathcal{P}$. 
\end{theorem}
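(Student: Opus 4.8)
The plan is to transport $\mu$ to a premeasure on the ring $\BB$, extend it to $\Sigma_\PP$ by the Carathéodory--Hahn theorem, and then show that $S$-homogeneity of the extension forces it to be unique. First I would record the behaviour of the dilation constants: writing $\alpha_f>0$ for the constant of $f\in S$, commutativity and homogeneity give $\alpha_{fg}=\alpha_f\alpha_g$, so $f\mapsto\alpha_f$ is a semigroup homomorphism into $(0,\infty)$, which extends to $G$ by $\alpha_{f^{-1}g}=\alpha_g/\alpha_f$. Then define $\bar\mu$ on $\BB$ by $\bar\mu(A)=\mu(f(A))/\alpha_f$ for any $f\in S$ with $f(A)\in\Sigma$; if $g(A)\in\Sigma$ as well, then $fg(A)=f(g(A))=g(f(A))\in\Sigma$ and homogeneity gives $\alpha_f\mu(g(A))=\mu(fg(A))=\alpha_g\mu(f(A))$, so $\bar\mu$ is well defined. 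One checks that $\BB$ is a ring (if $f(A),g(B)\in\Sigma$ then $fg(A\cup B)\in\Sigma$, and $fg(A\setminus B)=fg(A)\setminus fg(B)\in\Sigma$ by injectivity of $fg$), and -- the key point -- that $\bar\mu$ is \emph{countably} additive on $\BB$: if $A=\bigsqcup_n A_n$ with $A,A_n\in\BB$ and $f(A)\in\Sigma$, then the Lemma gives $f(A_n)\in\Sigma$ for the \emph{same} $f$, so $f(A)=\bigsqcup_n f(A_n)$ and $\bar\mu(A)=\sum_n\bar\mu(A_n)$. Thus $\bar\mu$ is a premeasure on the ring $\BB$.

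By the Carathéodory--Hahn extension theorem, $\bar\mu$ extends to a measure $\mu_\PP$ on $\sigma(\BB)=\Sigma_\PP$, realized as $\mu_\PP(E)=\inf\{\sum_n\bar\mu(A_n):E\subseteq\bigcup_n A_n,\ A_n\in\BB\}$. That $\mu_\PP$ extends $\mu$ uses the observation that the trace of $\Sigma_\PP$ on $\iota(X)$ is exactly $\Sigma$: for $A\in\Sigma$ we have $\iota(A)\in\BB$, while for $B\in\BB$ the set $B\cap\iota(X)$ is carried by the measurable injection $f$ into $\Sigma$ and hence itself lies in $\Sigma$ (the computation of the Lemma again), and there $\bar\mu(\iota(A))=\mu(A)$ by homogeneity. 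That $\mu_\PP$ is $S$-homogeneous follows from the outer-measure description: by the Theorem on measurability of the elements of $G$, each $f\in S$ acts on $\PP$ as a measurable bijection with $f(\BB)\subseteq\BB$ and $f^{-1}(\BB)\subseteq\BB$, and $\bar\mu(f^{-1}(A))=\bar\mu(A)/\alpha_f$, so the correspondence $\{A_n\}\leftrightarrow\{f^{-1}(A_n)\}$ between $\BB$-covers of $f(E)$ and of $E$ scales total $\bar\mu$-mass by $\alpha_f$; hence $\mu_\PP(f(E))=\alpha_f\mu_\PP(E)$ (and the same argument with $f^{-1}\in G$ shows $\mu_\PP$ is $G$-homogeneous).

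For uniqueness I would read ``extension of $\mu$ to $\PP$'' as a measure $\nu$ on $\Sigma_\PP$ with $\nu|_{\Sigma}=\mu$ that remains $S$-homogeneous -- without this requirement uniqueness already fails outside $\iota(X)$ (take $X=\NN$, $f(n)=n+1$, $\mu$ counting, so $\PP\cong\ZZ$ and the values on the negative integers are free), and with it the constants are forced to be the $\alpha_f$. Any such $\nu$ agrees with $\mu_\PP$ on $\BB$: if $f(A)\in\Sigma$ then $\alpha_f\nu(A)=\nu(f(A))=\mu(f(A))=\alpha_f\bar\mu(A)$. For general $E\in\Sigma_\PP$, countable subadditivity of $\nu$ against $\BB$-covers gives $\nu(E)\le\mu_\PP(E)$, while monotonicity gives $\nu(E)\ge\sup\{\bar\mu(A):A\in\BB,\ A\subseteq E\}$, and one then checks this supremum equals $\mu_\PP(E)$. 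Here the structural input is that $\PP=\bigcup_{g\in S}B_g$ with $B_g:=g^{-1}(\iota(X))\in\BB$ (since $g(B_g)=\iota(X)\in\Sigma$) and $B_g\cup B_h\subseteq B_{gh}$, together with the auxiliary fact that $E\cap B\in\BB$ whenever $E\in\Sigma_\PP$ and $B\in\BB$ (which is where the $G$-measurability Theorem and the trace identification above are used): if $\mu_\PP(E)<\infty$ then -- since any $A\in\BB$ with witness $f$ satisfies $A\subseteq B_f$ -- the members of any cover of $E$ sit inside such $B_g$'s, so $E$ lies in a countable increasing union $\bigcup_n B_{g_n}$, whence $E=\bigcup_n(E\cap B_{g_n})$ is an increasing union in $\BB$ and $\mu_\PP(E)=\lim_n\bar\mu(E\cap B_{g_n})=\lim_n\nu(E\cap B_{g_n})\le\nu(E)$; the case $\mu_\PP(E)=\infty$ is settled by exhibiting $\BB$-subsets of $E$ of unbounded $\bar\mu$-mass. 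Hence $\nu=\mu_\PP$ and the extension is unique.

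The step I expect to require the most care is precisely this last passage from the ring $\BB$ to the $\sigma$-algebra $\Sigma_\PP$ in the absence of a global $\sigma$-finiteness hypothesis: $\PP$ need not be a countable union of $\BB$-sets, so $\PP=\bigcup_g B_g$ is a genuinely non-sequential directed union and the classical uniqueness clause of the extension theorem does not apply verbatim. The remedy is that any fixed $E\in\Sigma_\PP$ already lies in the $\sigma$-algebra generated by countably many members of $\BB$, so relative to that single $E$ the directed union may be replaced by an increasing sequence, returning matters to the ordinary sequential continuity of measures. The subsidiary fact $E\cap B\in\BB$ for $E\in\Sigma_\PP$, $B\in\BB$, and the identification of the trace of $\Sigma_\PP$ on $\iota(X)$ with $\Sigma$, are the small lemmas that make this reduction work.
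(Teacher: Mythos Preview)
Your existence argument is exactly the paper's: show that $\BB$ is a ring, define the premeasure $\bar\mu(A)=\mu(f(A))/\alpha_f$ for any witness $f\in S$, verify well-definedness via $\alpha_f\mu(g(A))=\mu(fg(A))=\alpha_g\mu(f(A))$, check countable additivity on $\BB$ by using the Lemma to pass to a single common witness, and then extend. The paper carries out precisely these steps and closes with the sentence ``Consequently, $\mu$ has a unique extension to a measure on $\mathcal P$ with the $\sigma$-algebra generated by $\BB$,'' i.e.\ it invokes Carath\'eodory--Hahn implicitly rather than by name.

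Where you go beyond the paper is uniqueness. The paper spends no further words on it; you unpack what ``extension of $\mu$'' should mean and observe that under the literal reading (a measure on $\Sigma_\PP$ agreeing with $\mu$ only on $\Sigma$) your $\NN\!\to\!\ZZ$ example shows non-uniqueness, since $\{0\}\in\BB\setminus\Sigma$ and nothing forces $\nu(\{0\})=1$. That observation, and your remedy of imposing $S$-homogeneity on the extension so that $\nu$ is forced to agree with $\bar\mu$ on all of $\BB$, are correct and more careful than the paper. Under the paper's implicit reading---uniqueness of the extension of the \emph{premeasure on $\BB$}---your example no longer bites (there $\bar\mu$ is $\sigma$-finite on $\BB$ and Carath\'eodory's uniqueness clause applies), and your inner-approximation argument with the sets $B_g=g^{-1}(\iota(X))$ and the trace identity $\Sigma_\PP\!\restriction_{\iota(X)}=\Sigma$ is then doing work the paper simply does not attempt. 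In short: the construction is the same; your treatment of uniqueness is substantially more thorough than the paper's one-line appeal.
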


\begin{proof} Let $A,B\in\BB$. Then there are $f,g\in S$ such that $f(A)\in\Sigma$ and $g(B)\in\Sigma$. Then $f(g(A)),f(g(B))\in\Sigma$ and we have
\[
f(g(A\cup B))=f(g(A)) \cup f(g(B))\in\Sigma
\]
and
\[f(g(A\setminus B))=f(g(A)) \setminus f(g(B))\in\Sigma.
\]
Thus, if $A,B\in\BB$, then $A\cup B, A\setminus B\in \BB$. In other words, $\BB$ is a ring of subsets of $\mathcal{P}$. 

Now let $B\subset \mathcal{P}$ be such that $f(B)\in\Sigma$ for some $f\in G$. If $\mu(f(A))=\alpha\mu(A)$ for every $A\in\Sigma$, then we define
\[
\mu(B)=\frac1{\alpha}\mu(f(B)).
\]

To show that $\mu(B)$ is well-defined suppose  $g(B)\subset X$ for some $g\in G$ and $\mu(g(A))=\beta\mu(A)$ for every $A\in\Sigma$. Since
\[
f^{-1}(f(B))=g^{-1}(g(B))
\]
we have
\[
g(f(B))=f(g(B))
\]
and hence
\[
\beta\mu(f(B))=\mu(g(f(B)))=\mu(f(g(B)))=\alpha \mu(g(B)).
\]

Now assume that $A_1, A_2, \dots \in \BB$ are pairwise disjoint and $\cup_{n=1}^\infty A_n \in \BB$. We need to show that 
\[
\mu\left( \bigcup_{n=1}^\infty A_n \right) = \sum_{n=1}^\infty \mu (A_n).
\]

Since $\cup_{n=1}^\infty A_n \in \BB$, then there is $f\in S$ such that
\[
f\left( \bigcup_{n=1}^\infty A_n\right)= \bigcup_{n=1}^\infty f(A_n) \in \BB.
\]
Note that for every $n\in\NN$ we have $f(A_n) \subset f\left( \cup_{n=1}^\infty A_n\right) \in \Sigma$, so $f(A_n)\in \Sigma$. If $\mu(f(A))=\alpha\mu(A)$ for every $A\in\Sigma$, then
\begin{align*}
\mu\left( \bigcup_{n=1}^\infty A_n \right) 
&= \frac1{\alpha} \mu \left(f\left( \bigcup_{n=1}^\infty A_n \right) \right)
= \frac1{\alpha} \mu \left(\bigcup_{n=1}^\infty f(A_n) \right)\\
&= \sum_{n=1}^\infty  \frac1{\alpha} \mu(f(A_n))
= \sum_{n=1}^\infty \mu (A_n).
\end{align*}
Consequently, $\mu$ has a unique extension to a measure on $\mathcal{P}$ with the $\sigma$-algebra generated by $\BB$.
\end{proof}

\section{Examples}

\begin{example}
Let $X=[-1,1]$ and let $\mu$ be the Lebesgue measure on the $\sigma$-algebra of Borel subsets of $[-1,1]$. Let $S$ be the semigroup generated by the function $f(x)=x/2$. Then $S$ is a commutative semigroup acting on $[-1,1]$ injectively. 

In this case  $\mathcal{P}(X,G)$ can be identified with $\mathbb{R}$ and $\Sigma_\PP$ is the $\sigma$-algebra of Borel subsets of $\mathbb{R}$. Since for any measurable $A\subset[-1,1]$ and any $n=0,1,2,\dots$ we have
$$
\mu(f^n(A))=\frac1{2^n} \mu(A),
$$
$\mu$ has a unique extension to a measure on $\mathbb{R}$, which is the Lebesgue measure on $\mathbb{R}$.
\end{example}

Note that in the above example the $\sigma$-algebra $\Sigma_\PP$ restricted to $[-1,1]$ is the original $\sigma$-algebra. It is not always the case, as the second example shows. 

\begin{example}
Let $X=\mathbb{R}$ with the $\sigma$-algebra $\Sigma$ generated by all sets of the form $[k,k+1)$ where $k\in\mathbb{Z}$. We define $\mu(\left[k,k+1 \right))=1$ and extend $\mu$ to $\Sigma$ by $\sigma$-additivity. Let $S$ be the semigroup generated by the function $f(x)=2x$. Then $S$ is a commutative semigroup acting on $\mathbb{R}$ injectively. Moreover, $f^n$ maps sets from $\Sigma$ to sets in $\Sigma$ and we have
$$
\mu(f^n(A))=2^n \mu(A).
$$
In this case  $\mathcal{P}(X,G)=X=\mathbb{R}$ and $\Sigma_\PP$ is the $\sigma$-algebra of Borel subsets of $\mathbb{R}$. The unique extension of $\mu$ to a measure on $\mathbb{R}$ is the Lebesgue measure.
\end{example}

In the above example the pseudoquotient extension does not change the set $X$ but extends a very simple $\sigma$-algebra on $\mathbb{R}$ with a rather primitive measure to the $\sigma$-algebra of Borel subsets of $\mathbb{R}$ with the Lebesgue measure.

\vspace{2mm}

{\bf Acknowledgment.} I would like to thank Jason Bentley for his helpful comments.

\end{document}